\newcommand\nextedge[1]{#1^{+}}
\newcommand\outdegree[2]{deg_#1^{+}(#2)}
\newcommand\nsptrees[2]{\mathcal{T}_{#1}({#2})}
\newcommand\chipadd[1]{E_{#1}}
\newcommand\nsp[2]{\mathcal{T}_{#1}(#2)}
\newcommand\outdeg[2]{deg_{#1}^{+}(#2)}
\newcommand\indegree[2]{deg_{#1}^{-}(#2)}
\newcommand\indeg[2]{deg_{#1}^{-}(#2)}
\newcommand\numedges[3]{a_G({#2,#3})}
\def\crstate{single-chip-and-rotor }
\newtheorem{theorem}{Theorem}
\newtheorem{lemma}{Lemma}
\newtheorem{proposition}{Proposition}
\newtheorem{corollary}{Corollary}
\newtheorem*{ques}{Question}
\begin{document}

\title{Orbits of rotor-router operation and stationary distribution of random walks on directed graphs\thanks{Paper was partially sponsored by Vietnam Institute for Advanced Study in Mathematics (VIASM), the Vietnamese National Foundation for Science and Technology Development (NAFOSTED), and the European Research Council under the European Community's Seventh Framework Programme (FP7/2007-2013 Grant Agreement no. 257039.}}

\author{Trung Van Pham} 
\date{}          

\maketitle
\begin{abstract}
The rotor-router model is a popular deterministic analogue of random walk. In this paper we prove that all orbits of the rotor-router operation have the same size on a strongly connected directed graph (digraph) and give a formula for the size. By using this formula we address the following open question about orbits of the rotor-router operation: Is there an infinite family of non-Eulerian strongly connected digraphs such that the rotor-router operation on each digraph has a single orbit? 

It turns out that on a strongly connected digraph the stationary distribution of the simple random walk coincides with the frequency of vertices in a rotor walk. In this common aspect a rotor walk simulates a random walk. This gives one similarity between two models on (finite) digraphs.
\end{abstract}

\section{Introduction}
The rotor-router model is a popular deterministic analogue of random walk that was discovered firstly by  Priezzhev, D. Dhar et al.  as a model of self organized criticality  under the name ``Eulerian walkers" \cite{PDDS96}. The model has become popular recently because it shows many surprising properties which are similar to those of random walk  \cite{CDST06,CS06,DF09,HP10}. The model was studied mostly on $\mathbb{Z}^d$ with the problems similar to those of the random walk. Although the model was defined firstly on (finite) graphs, there are not many known results on this class of graphs, in particular a similarity between the two models on digraphs is still unknown.

Let $G=(V,E)$ be a connected digraph. For each vertex $v$ the set of the edges emanating from $v$ is equipped with a cyclic ordering. We denote by $\nextedge{e}$ the next edge of edge $e$ in this order. A vertex $s$ of $G$ is called \emph{sink} if its outdegree is $0$. A \emph{rotor configuration} $\rho$ is a map  from the set of non-sink vertices of $G$ to $E$ such that for each non-sink vertex $v$ of $G$ $\rho(v)$ is an edge emanating from $v$. We start with a rotor configuration and a chip placed on some vertex of $G$. When a chip is at a non-sink vertex $v$, routing chip at $v$ with respect to a rotor configuration $\rho$ means the process of updating $\rho(v)$ to $\rho(v)^{+}$, and then the chip moves along the updated edge $\rho(v)$ to the head. The chip is now at the head of the edge $\rho(v)$. We define a \emph{\crstate  state} (often briefly \emph{state}) to be a pair $(v,\rho)$ of a vertex and a rotor configuration $\rho$ of $G$. The vertex $v$ in $(v,\rho)$ indicates the location of the chip in $G$. When $v$ is not a sink, by routing the chip at $v$ we obtain a new state $(v',\rho')$. This procedure is called \emph{rotor-router operation}. Look at \linebreak Figure \ref{fig:im000102} for an illustration of the rotor-router operation. In this example the acyclic ordering at each vertex is adapted to the counter-clockwise rotation. When the chip is at a sink, it stays at the sink  forever, and therefore the rotor-router operation fixes such states. A sequence of vertices of $G$ indicating the consecutive locations of the chip is called a \emph{rotor walk}.

\begin{figure}
\centering
\subfloat[A grid graph]{\label{fig:im00}\includegraphics[bb=0 0 272 272,width=1.28in,height=1.33in]{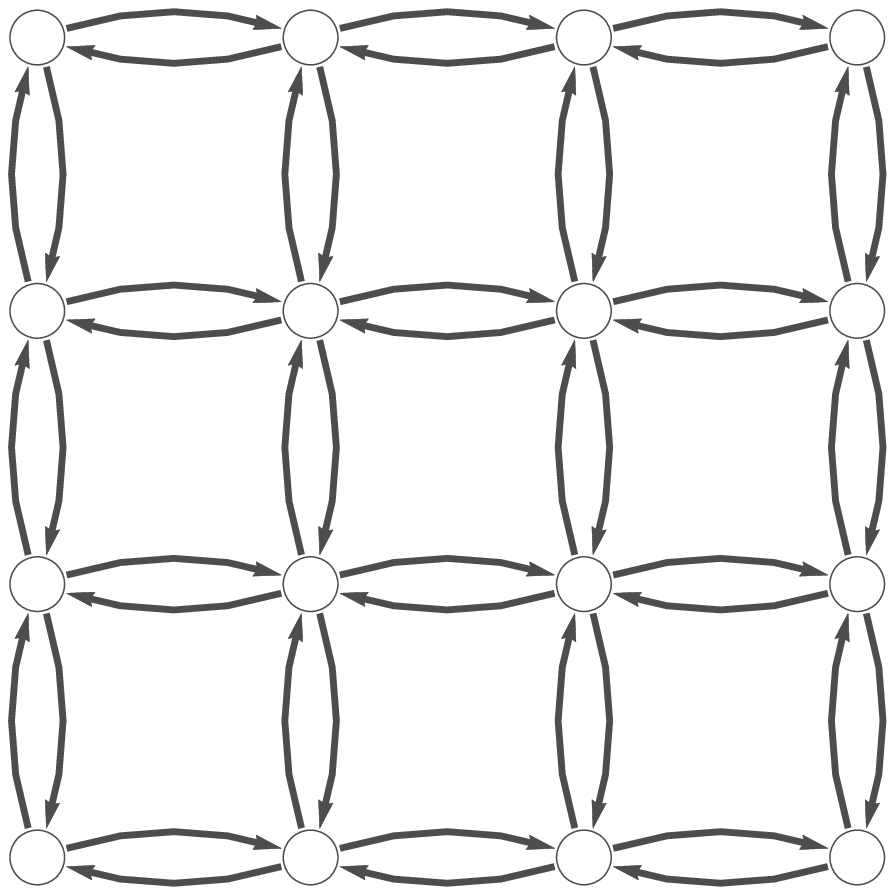}} \quad \quad
\subfloat[A {\crstate} state (the plane edges for rotor configuration, and the black vertex indicates the location of the chip)]{\label{fig:im01}\includegraphics[bb=0 0 310 310,width=1.28in,height=1.33in]{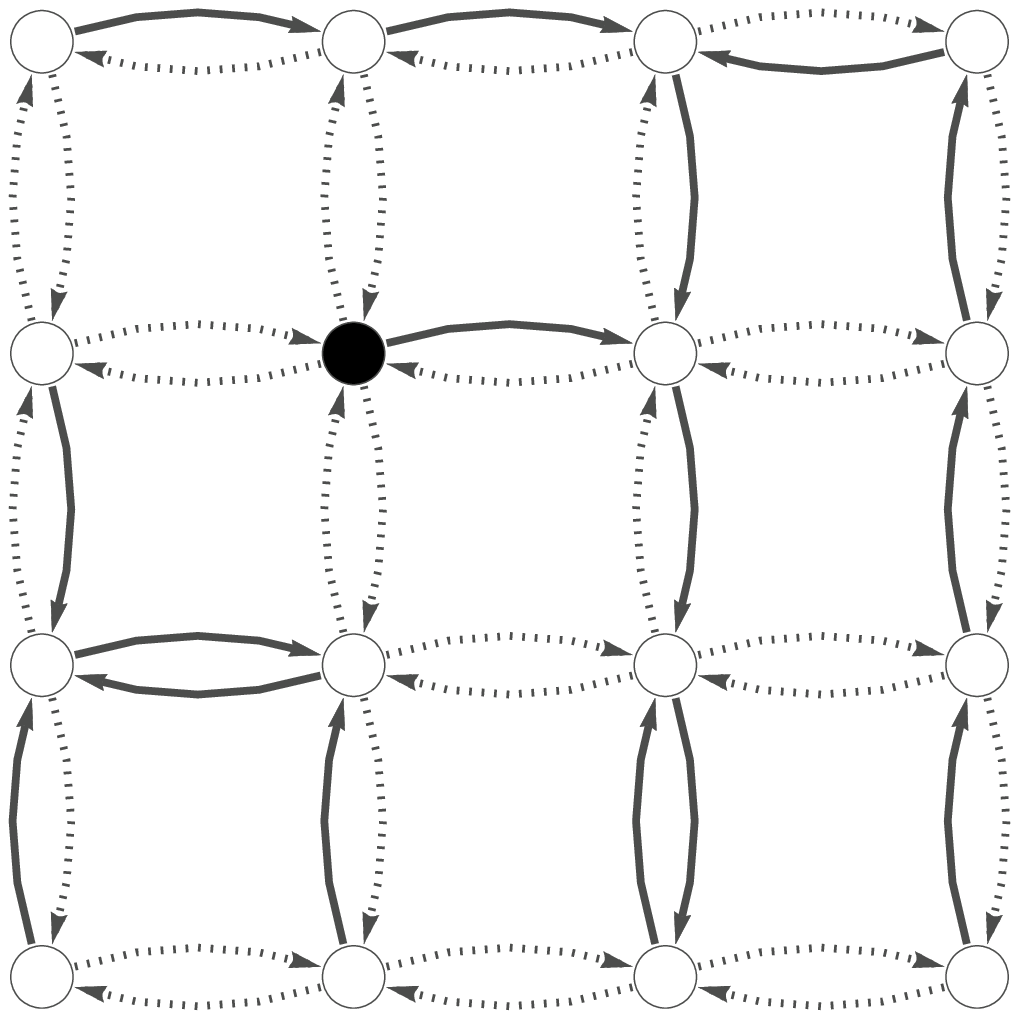}}\quad \quad
\subfloat[Resulting {\crstate} state]{\label{fig:im02}\includegraphics[bb=0 0 286 285,width=1.28in,height=1.33in]{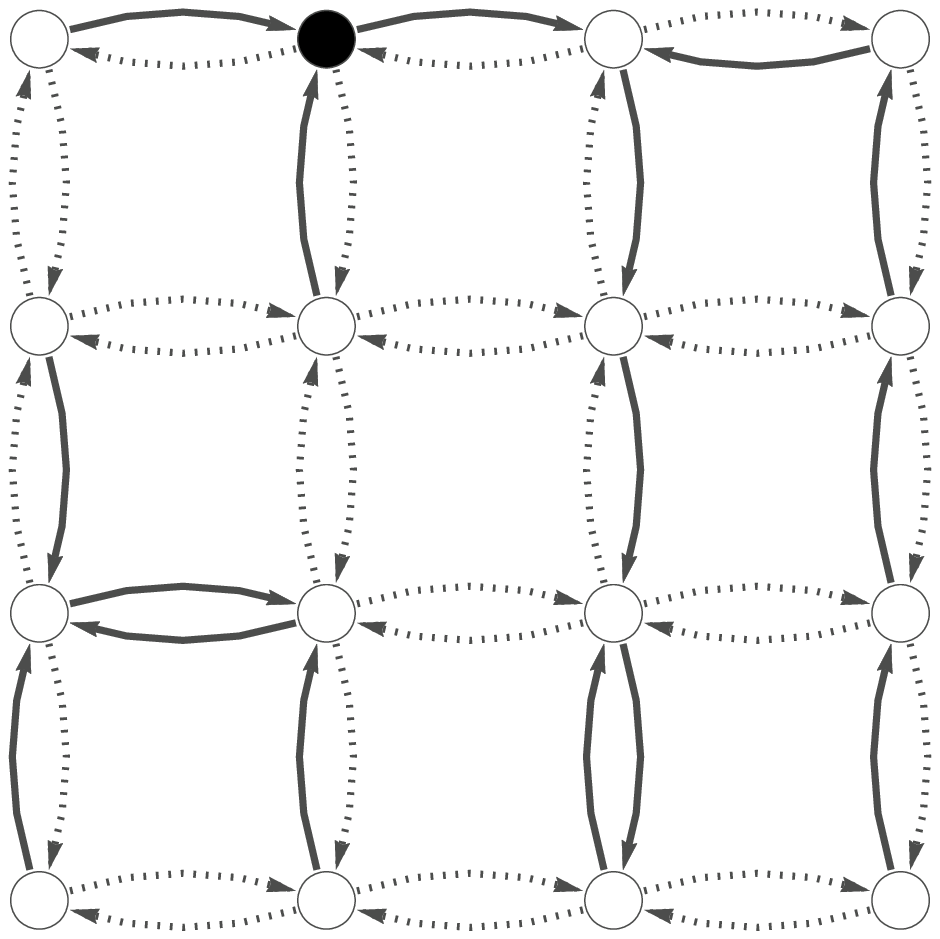}}
\caption{}
\label{fig:im000102}
\end{figure}

If $G$ has no sink, a state $(v,\rho)$ is \emph{recurrent} if starting from $(v,\rho)$ and after some steps (positive number of steps) of iterating the rotor-router operation we obtain $(v,\rho)$ again. The \emph{orbit} of a recurrent state is the set of all states which are reachable from the recurrent state by iterating the rotor-router operation.  Holroyd et al. gave a characterization for recurrent states \cite{HLMPPW08}.  By investigating orbits of recurrent states on an Eulerian digraph the authors observed that sizes of orbits are extremely short while number of recurrent states is typically exponential in number of vertices. They asked whether there is an infinite family of non-Eulerian strongly connected digraphs such that all recurrent states of each digraph in the family are in a single orbit (Question 6.5 in \cite{HLMPPW08}). An immediate fact from the results in \cite{HLMPPW08,PDDS96} is that all orbits have the same size on an Eulerian digraph, namely $|E|$. The following main theorem shows that this fact holds not only for Eulerian digraphs but also for strongly connected digraphs.

 \begin{theorem}\label{theo:orbitsize}
Let $G=(V,E)$ be a strongly connected digraph, and $c$ be a recurrent state of $G$.  Then the size of the orbit of $c$ is $\frac{1}{M}\underset{v\in V}{\mathlarger{\sum}} \outdegree{G}{v} \nsptrees{G}{v} $, where $\nsptrees{G}{v}$ denotes the number of oriented spanning trees of $G$ rooted at $v$ and $M$ denotes the greatest common divisor of the numbers in $\{ \nsptrees{G}{v}:v\in V \}$. As a corollary, the number of orbits is $M$.
\end{theorem}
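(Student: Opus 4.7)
The strategy is to extract strong constraints on the visit-count vector of a single orbit from the matrix-tree theorem, and then to match these against an independent count of the total number of recurrent states. Fix a recurrent state $(v_0,\rho_0)$ whose orbit has length $N$, and let $n_v$ denote the number of visits of the chip to $v$ during one full orbit. Since each visit rotates the rotor at $v$ by one position and the rotor must return to $\rho_0$, the outdegree $\outdegree{G}{v}$ divides $n_v$; set $k_v := n_v/\outdegree{G}{v}$. As the rotor at $v$ cycles exactly $k_v$ times through its outgoing edges, each edge $v\to w$ is traversed $k_v$ times in one orbit, and the requirement that the chip ends where it began yields the flow-conservation identity
\begin{equation*}
k_v\cdot\outdegree{G}{v} \;=\; \sum_{u:\,u\to v\in E} k_u \qquad(v\in V),
\end{equation*}
i.e.\ $(k_v)_{v\in V}$ lies in the kernel of the directed Laplacian $L$ of $G$. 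By the matrix-tree theorem this kernel is one-dimensional over $\mathbb{Q}$ and is spanned by $(\nsptrees{G}{v})_{v\in V}$, whence $k_v=\lambda\,\nsptrees{G}{v}$ for some $\lambda>0$. Integrality and positivity of the $k_v$ force $\lambda=j/M$ for a positive integer $j$, and consequently
\begin{equation*}
N \;=\; \lambda\sum_{v\in V}\outdegree{G}{v}\nsptrees{G}{v} \;\geq\; \frac{1}{M}\sum_{v\in V}\outdegree{G}{v}\nsptrees{G}{v}.
\end{equation*}

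Next I would count the total number of recurrent states directly. The natural parametrisation sends $(v,\rho)$ to the pair consisting of the outgoing edge $\rho(v)$ at $v$ and the restriction $\rho|_{V\setminus\{v\}}$, and via the standard characterisation of recurrent rotor configurations as unicycles one expects this restriction to range precisely over the spanning in-trees of $G$ rooted at $v$. This would yield $\sum_{v\in V}\outdegree{G}{v}\nsptrees{G}{v}$ recurrent states in total. Combined with the previous paragraph---which shows that each orbit has length $j_O\cdot\frac{1}{M}\sum_v\outdegree{G}{v}\nsptrees{G}{v}$ for some integer $j_O\ge 1$---this count forces $\sum_O j_O = M$, and in particular bounds the number of orbits above by $M$.

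The main obstacle is ruling out the possibility $j_O>1$ for some orbit, i.e.\ showing that every orbit realises the minimum length, so that the number of orbits is exactly $M$. I expect to settle this by exhibiting a free action of a cyclic group of order $M$ on the set of orbits---a natural candidate being a $\mathbb{Z}/M\mathbb{Z}$-valued invariant arising from the interplay between $\ker L\cap\mathbb{Z}^V$ and the firings executed along an orbit. An alternative is to embed $G$ into an Eulerian multidigraph $\widetilde{G}$ whose edge multiplicities are guided by the vector $(\nsptrees{G}{v}/M)_{v\in V}$; on $\widetilde{G}$ every orbit is already known to have the common length $|E(\widetilde{G})|$, and the task is to descend the equal-orbit-length property back to $G$. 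Identifying the right invariant (or cover) is the technical heart of the argument.
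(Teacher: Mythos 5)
Your first two steps are sound: the flow-conservation identity does place the visit-count vector of an orbit in the left kernel of the Laplacian, which (by the same Markov-chain-tree-theorem fact the paper isolates as its Lemma \ref{lem:leftkernel} and Corollary \ref{coro:generator}) forces each orbit to have length $j_O\cdot\frac{1}{M}\sum_{v}\outdegree{G}{v}\nsptrees{G}{v}$ for some integer $j_O\ge 1$, and the count of unicycles as $\sum_{v}\outdegree{G}{v}\nsptrees{G}{v}$ is correct and standard. But, as you say yourself, this only gives $\sum_O j_O=M$; the theorem is precisely the assertion that every $j_O=1$, and neither of your two suggested devices for proving this (a free $\mathbb{Z}/M\mathbb{Z}$-action on orbits, or descent from an Eulerian cover) is actually constructed. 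So the proposal has a genuine gap at exactly the point you flag as the technical heart, and flow conservation alone cannot close it: that identity is insensitive to replacing an orbit by one traversed several times around, so it can never distinguish $j_O=1$ from $j_O>1$.

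The paper closes the gap by localizing at one vertex $w_1$ and converting the first-return question into an exact order computation rather than a divisibility statement. Delete the outgoing edges of $w_1$ so that $w_1$ becomes a global sink; the rotor configurations observed at the successive visits of the chip to $w_1$, restricted to this smaller graph, are acyclic, and each passes to the next by one application of the chip-addition operator $\chipadd{u}$ at the head $u$ of the current rotor edge at $w_1$. Lemma \ref{lem:eclassaction} says this action is faithful modulo the reduced Laplacian lattice, so the full state first returns to its initial value after exactly $p\cdot\outdegree{G}{w_1}$ departures from $w_1$, where $p$ is the order of the row $\Delta'_{w_1}$ in $\mathbb{Z}^{n-1}/\langle\{\Delta'_{v}:v\neq w_1\}\rangle$; by Lemma \ref{lem:spantoorder} this order equals $\nsptrees{G}{w_1}/M$ exactly, not a multiple of it. That exactness is the missing ingredient in your argument. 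If you wish to complete your own route, the cyclic invariant you hypothesize can indeed be extracted from this chip-addition machinery, but at that point you would essentially be reconstructing the paper's proof.
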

Note that the value $\nsp{G}{v}$ can be computed efficiently by using the \emph{matrix-tree theorem} \cite{Sta99}. Thus one can compute the size of an orbit efficiently without listing all states in an orbit. Although the orbits depend on the choice of cyclic orderings, it is interesting that the size of orbits is independent of the choice of cyclic orderings. All recurrent states are in a single orbit if and only if $M=1$. By doing computer simulations on random digraph $G(n,p)$ with $p\in (0,1)$ fixed, we observe that $M_{n,p}=1$ occurs with a high frequency when $n$ is sufficiently large. This observation contrasts with the observation on Eulerian digraphs when one sees the orbits are extremely short  \cite{HLMPPW08,PDDS96}.
\begin{ques}
Let $p\in (0,1)$ be fixed. Is $\textbf{Pr}\{M_{n,p}=1\}\to 1$ as $n\to \infty$?
\end{ques}
\noindent By using Theorem \ref{theo:orbitsize} we give a positive answer for the open question of Holroyd et al. in \cite{HLMPPW08}.
\begin{theorem}
\label{theo:singleorbit}
There is an infinite family of non-Eulerian strongly connected digraphs $G_n$ such that for each $n$ all recurrent states of $G_n$ are in a single orbit.
\end{theorem}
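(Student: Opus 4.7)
The plan is to invoke Theorem~\ref{theo:orbitsize}: it gives the number of orbits as $M=\gcd\{\nsp{G}{v}:v\in V\}$, so one needs only to exhibit an infinite family of non-Eulerian strongly connected digraphs $G_n$ with $M=1$.

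For each $n\ge 3$ I would take $G_n$ to be the digraph on vertices $v_1,\dots,v_n$ consisting of the directed cycle $v_1\to v_2\to\cdots\to v_n\to v_1$ together with a single chord edge $v_1\to v_3$. Strong connectivity is immediate from the cycle, and $G_n$ is non-Eulerian because $\outdeg{G_n}{v_1}=2$ while $\indeg{G_n}{v_1}=1$ (equivalently, $\indeg{G_n}{v_3}=2$ while $\outdeg{G_n}{v_3}=1$).

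It then suffices to show $\nsp{G_n}{v_1}=1$, since the gcd over all vertices will automatically equal $1$. Every vertex other than $v_1$ has out-degree $1$, so in any oriented spanning tree rooted at $v_1$ the outgoing edge is forced at every non-root vertex, yielding the unique candidate tree with edge set $\{v_2\to v_3,\,v_3\to v_4,\,\dots,\,v_{n-1}\to v_n,\,v_n\to v_1\}$; a direct check shows this is acyclic and reaches $v_1$ from every vertex, so it is indeed a spanning tree. Hence $M=1$ and Theorem~\ref{theo:orbitsize} yields a single orbit, proving the theorem for the family $(G_n)_{n\ge 3}$.

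There is essentially no obstacle: the construction is chosen precisely so that all the flexibility in the oriented-spanning-tree count is concentrated at the single vertex $v_1$, and rooting the tree there removes it entirely. The only care needed is to verify strong connectivity and the non-Eulerian property, both of which are transparent from the edge list, and to confirm that the forced edges do not form a cycle, which is clear since they form a path ending at the root.
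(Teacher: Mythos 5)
Your proposal is correct and follows essentially the same strategy as the paper: both reduce the problem via Theorem~\ref{theo:orbitsize} to exhibiting a non-Eulerian strongly connected family with $M=1$, achieved by forcing some vertex $v$ to have $\mathcal{T}_{G_n}(v)=1$ (the paper uses a path $1\to 2\to\cdots\to n$ with back-edges $(i,1)$ and roots at $n$; you use a cycle with one chord and root at $v_1$). The verification that the spanning tree is unique, the non-Eulerian check, and the appeal to the orbit-count formula are all sound.
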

\noindent Note that the recurrent states of a directed cycle graph with $n$ vertices are in a single orbit. This is the reason why the digraphs in the theorem are required to be non-Eulerian.


For $G$ being strongly connected let $(v_i)_{i=0}^{\infty}$ be a rotor walk. As we will show in the proof of Theorem \ref{theo:orbitsize} the number of occurences of the chip at a vertex $v$ in an orbit is $\frac{1}{M}\nsp{G}{v}\outdegree{G}{v}$. This implies that in a rotor walk the chip visits a vertex $v$ with the frequency $\underset{t\to \infty}{\lim}\frac{\underset{0\leq i\leq t-1}{\mathlarger{\sum}}\mathlarger{\textbf{1}}_{\{v_i=v\}}}{t}=\frac{\nsp{G}{v}\outdegree{G}{v}}{\underset{w\in V}{\mathlarger{\sum}}\nsp{G}{w}\outdegree{G}{w}}$. This frequency coincides with the stationary distribution of the simple random walk on $G$. Thus a rotor walk simulates a random walk in this aspect. It would be interesting to explore more properties of random walks by investigating properties of rotor walks on finite digraphs.

The structure of this paper is as follows. In Section \ref{backgrounds} we will give some background on the rotor-router model. The definitions and the results on the rotor-router model we present in this section are mainly from \cite{HLMPPW08}. In Section \ref{section:orbitsize} we will give a proof for Theorem \ref{theo:orbitsize} and use this result to give a proof for Theorem \ref{theo:singleorbit}. 
\section{Background on rotor-router model}
\label{backgrounds}
In this paper all digraphs may have loops and multi-edges. For a digraph $G$ we denote by $V(G)$ and $E(G)$ the set of vertices and the set of edges of $G$, respectively. In this section we work with a digraph $G=(V,E)$. The outdegree (resp. indegree) of a vertex $v$ is denoted by $\outdegree{G}{v}$ (resp. $\indegree{G}{v}$). For two distinct vertices $v$ and $v'$ we denote by $\numedges{G}{v}{v'}$ the number of edges connecting $v$ to $v'$. Note that $\numedges{G}{v}{v}$ is the number of loops at $v$. A \emph{walk} in $G$ is an alternating sequence of vertices and edges $v_0,e_0,v_1,e_1,\dots,v_{k-1},e_{k-1},v_k$ such that for each $i\leq k-1$ we have $v_i$ and $v_{i+1}$ are the tail and the head of $e_i$, respectively. A \emph{path} is a walk in which all vertices are distinct. For simplicity we often represent a walk (or path) by $e_0,e_1,\dots,e_{k-1}$, or $v_0,v_1,v_2,\dots,v_k$ if there is no danger of confusion. A subgraph $T$ of $G$ is called \emph{oriented spanning tree} of $G$ rooted at a vertex $s$ of $G$ if $s$ has outdegree $0$ in $T$ for every vertex $v$ of $G$ there is unique path from $v$ to $s$ in $T$. If $G$ has no sink, a \crstate state $(w,\rho)$ is called a \emph{unicycle} if the subgraph of $G$ induced by the edges in $\{\rho(v):v\in V\}$ contains a unicycle and $w$ lies on this cycle. Observe that the rotor-router operation takes unicycles to unicycles. Look at Figure \ref{fig:im030504} for examples of unicycles and non-unicycles.
\begin{figure}[!h]
\centering
\subfloat[A unicycle]{\includegraphics[bb=0 0 272 272,width=1.4in,height=1.4in]{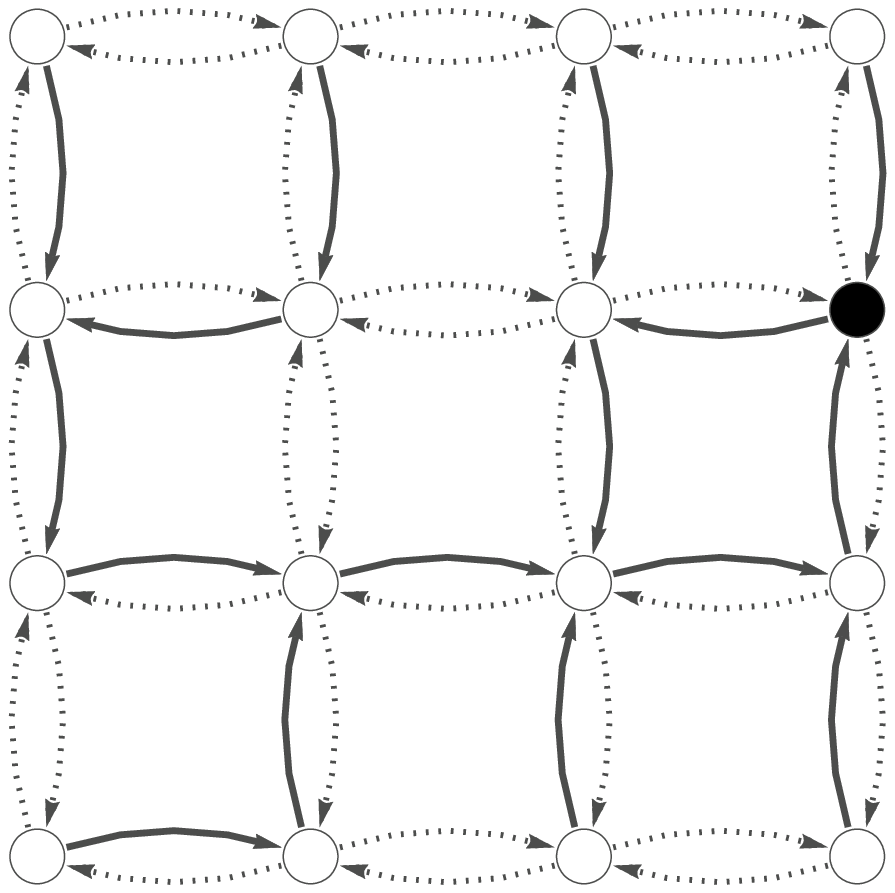}} \quad \quad
\subfloat[A non-unicycle]{\includegraphics[bb=0 0 272 272,width=1.4in,height=1.4in]{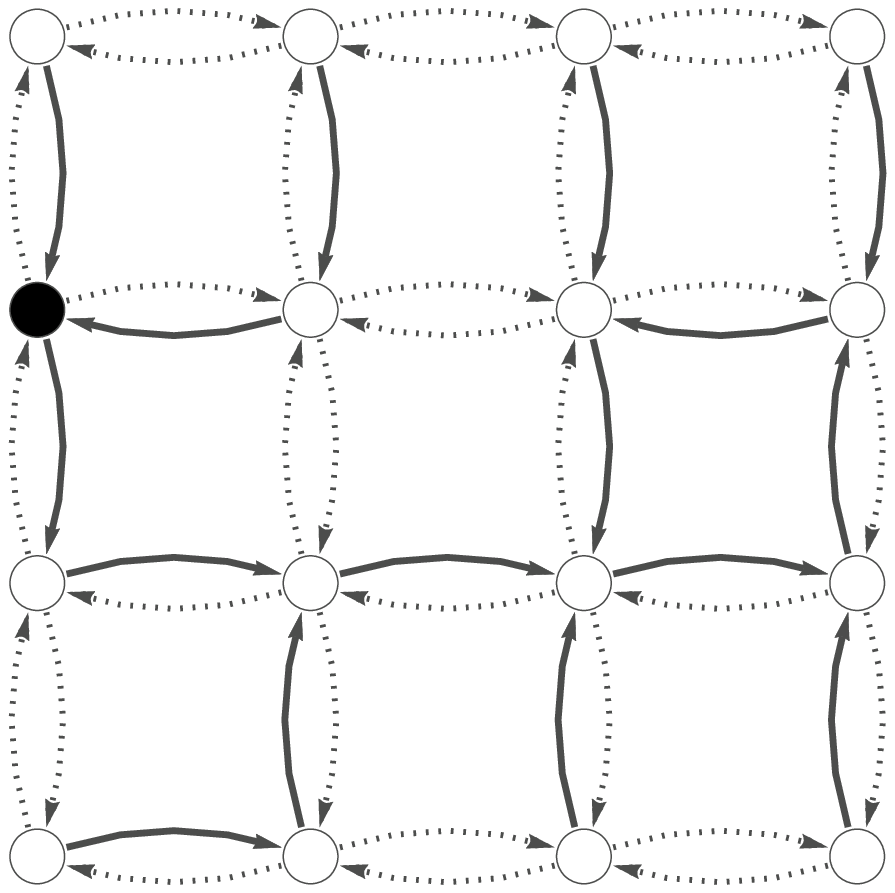}} \quad \quad
\subfloat[A non-unicycle]{\includegraphics[bb=0 0 272 272,width=1.4in,height=1.4in]{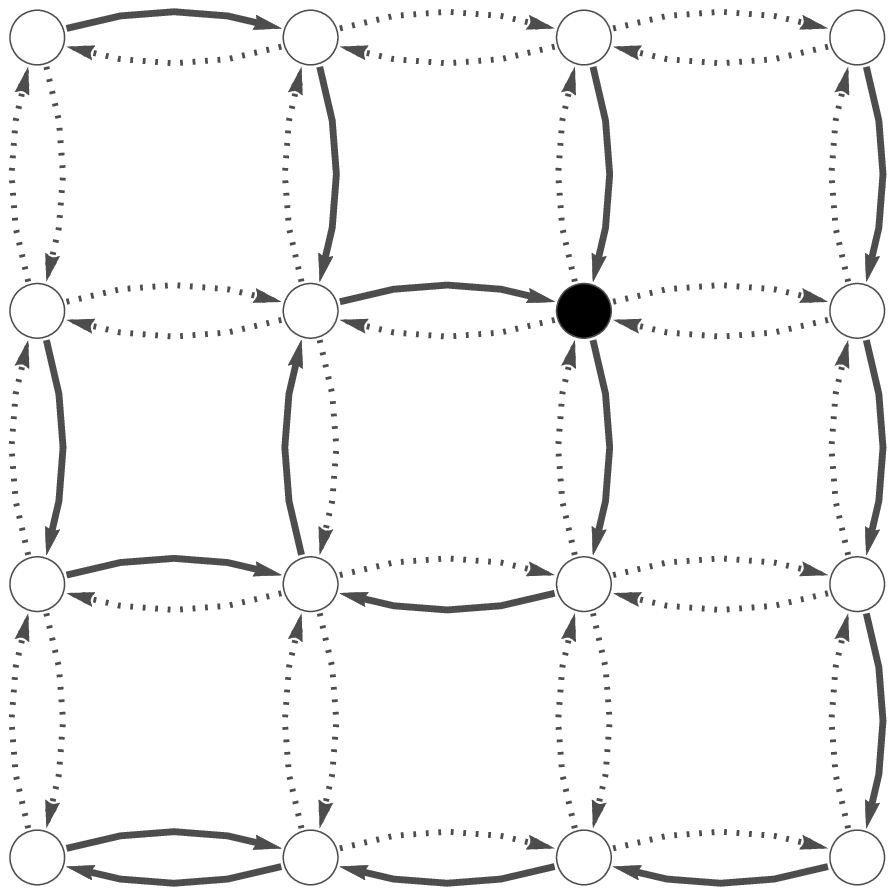}}
\caption{}
\label{fig:im030504}
\end{figure}
For a characterization of recurrent states we have the following lemma.
\begin{lemma}\cite{HLMPPW08}
Let $G=(V,E)$ be a strongly connected digraph. A state $(w,\rho)$ is recurrent if and only if $(w,\rho)$ is a unicycle.
\end{lemma}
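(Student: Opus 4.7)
The plan is to prove both implications by combining the paper's observation that the rotor-router operation sends unicycles to unicycles with two complementary ingredients. For the implication \emph{unicycle $\Rightarrow$ recurrent} I would show that rotor-router acts as an injection (hence a bijection) on the finite set of unicycles, so each unicycle must lie in a periodic orbit of the map. For the implication \emph{recurrent $\Rightarrow$ unicycle} I would analyse one full return cycle of the chip, using a flow-conservation argument to prove every vertex is visited and then a last-visit-time ordering to show that the pointers of $\rho$ funnel every vertex into $w$.

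For the first direction I would identify the unique preimage of a unicycle $(w,\rho)$ under the rotor-router operation. Writing the cycle of $\rho$ as $w=u_0\to u_1\to\cdots\to u_{k-1}\to w$, I claim the only unicycle mapping to $(w,\rho)$ is $(u_{k-1},\rho')$, where $\rho'$ agrees with $\rho$ except that $\rho'(u_{k-1})$ is the cyclic predecessor of the edge $(u_{k-1},w)$ in the ordering at $u_{k-1}$. One verifies directly that this state is itself a unicycle by tracing $\rho'$ from $u_{k-1}$: the trajectory leaves the original cycle at $u_{k-1}$, and since every vertex of the functional graph of $\rho$ reaches the cycle, which passes through $u_{k-1}$, it must return to $u_{k-1}$. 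Any other potential preimage would correspond to a vertex $v$ with $\head{\rho(v)}=w$ lying off the original cycle; but then the $\rho$-cycle is preserved in the modified configuration and $v$ still sits in the tree portion, so that candidate fails to be a unicycle. Combined with closure of unicycles under the operation, injectivity on this finite set upgrades the map to a bijection, and every unicycle is therefore recurrent.

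For the converse, assume $(w,\rho)$ is recurrent with return time $T$. Because the rotor at every vertex $v$ is restored after $T$ steps, the number $f(v)$ of times the chip is routed at $v$ must be a nonnegative multiple of $\outdeg{G}{v}$, say $f(v)=k(v)\,\outdeg{G}{v}$; balancing chip-entries against chip-exits at each vertex yields
\[
\sum_{u\in V}k(u)\,\numedges{G}{u}{v}\;=\;k(v)\,\outdeg{G}{v}\qquad\text{for all }v\in V.
\]
Since $k(w)\geq 1$ and the set $\{v:k(v)=0\}$ would be closed under taking in-neighbors, strong connectivity forces $k(v)\geq 1$ for every $v$, so every vertex is routed at least once. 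I then order the vertices by the time $\tau(v)<T$ of their last routing. After this step the rotor at $v$ is never updated again, so it must coincide with $\rho(v)$; hence at time $\tau(v)+1$ the chip sits at $\head{\rho(v)}$. For $v\neq w$ this forces either $\head{\rho(v)}=w$ or $\tau(\head{\rho(v)})>\tau(v)$, so iterating the map $v\mapsto\head{\rho(v)}$ strictly increases $\tau$ until it reaches $w$. Thus following $\rho$ from any vertex leads to $w$; the functional graph of $\rho$ has a unique cycle passing through $w$ with all other vertices hanging off as trees feeding into it, i.e., $(w,\rho)$ is a unicycle.

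I expect the main technical obstacle to be the uniqueness half of the injectivity argument, where one must carefully rule out the spurious preimages coming from tree vertices whose rotor happens to point to $w$, since these are precisely the configurations that could masquerade as legitimate cycle-predecessors. The forward direction's flow-balance identity is essentially bookkeeping, and the subsequent last-visit-time chain to $w$ falls out once every vertex is known to be routed.
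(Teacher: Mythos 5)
Your proof is correct. Note that the paper itself gives no argument for this lemma at all: it is quoted from \cite{HLMPPW08} as a known result (the only related remark in the text is the observation that the rotor-router operation takes unicycles to unicycles, which you use). So what you have written is a self-contained reconstruction, and it follows the same two-pronged strategy as the cited source: for \emph{unicycle $\Rightarrow$ recurrent}, exhibiting the unique unicycle preimage $(u_{k-1},\rho')$ and concluding that the operation is a bijection on the finite set of unicycles, hence every unicycle is periodic; for \emph{recurrent $\Rightarrow$ unicycle}, the divisibility $f(v)=k(v)\,deg^{+}(v)$ forced by restoration of the rotors, the flow-balance identity showing $\{v:k(v)=0\}$ is closed under in-neighbours and hence empty by strong connectivity, and the last-exit ordering showing that the $\rho$-pointers funnel every vertex into $w$. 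All the steps check out: the uniqueness half correctly rules out off-cycle vertices $v$ with $H(\rho(v))=w$ (the old cycle survives in the modified configuration and misses $v$, so that state cannot be a unicycle), and the last-visit argument correctly uses that the rotor at $v$ is frozen after time $\tau(v)$ and must therefore already equal $\rho(v)$. The only point worth making fully explicit in a final write-up is the closing step for $v=w$: the forward $\rho$-orbit of $w$ passes to $H(\rho(w))$, which by your chain argument leads back to $w$, so $w$ genuinely lies on a cycle, and weak connectivity of the functional graph then gives uniqueness of that cycle.
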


Fix a linear order $v_1<v_2<\dots<v_n$ on $V$, where $n=|V|$. The $n\times n$ matrix given by 
$$
\Delta_{i,j}=\begin{cases}
-\numedges{G}{v_i}{v_j}&\text{if }i\neq j\\
\outdegree{G}{v_i}-\numedges{G}{v_i}{v_i}&\text{if } i=j,
\end{cases}
$$
is called the \emph{Laplacian} matrix of $G$. Let $j\in \{1,2,\dots,n\}$ be an arbitrary and $\Delta'$ be the matrix which is obtained from $\Delta$ by deleting the $j^{th}$ row and the $j^{th}$ column. We define the equivalence relation $\sim$ on $\mathbb{Z}^{n-1}$ by $c_1\sim c_2$ iff there is $z\in \mathbb{Z}^{n-1}$ such that $c_1-c_2=z \Delta'$. We recall the matrix-tree theorem.
\begin{theorem}\cite{Sta99}
The number of oriented spanning trees of $G$ rooted at $v_j$ is equal to the number of equivalence classes of $\sim$, and therefore equal to $Det(\Delta')$.
\end{theorem}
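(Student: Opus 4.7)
The statement bundles two assertions that I would treat separately: (i) the number of $\sim$-classes equals $|\mathrm{Det}(\Delta')|$, and (ii) $\mathrm{Det}(\Delta')$ equals the number $\nsp{G}{v_j}$ of oriented spanning trees of $G$ rooted at $v_j$.

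For (i) the argument is purely algebraic. The equivalence relation $\sim$ is precisely the quotient of $\mathbb{Z}^{n-1}$ by the sublattice $\Delta'\mathbb{Z}^{n-1}$, so the number of classes is the index $[\mathbb{Z}^{n-1}:\Delta'\mathbb{Z}^{n-1}]$. Strong connectedness of $G$ guarantees at least one oriented spanning tree rooted at $v_j$ (build it by taking for each non-root $v$ a directed path to $v_j$ and pruning), so $\nsp{G}{v_j}\geq 1$; I would verify independently that $\Delta'$ is non-singular by a diagonal-dominance argument along such paths. Putting $\Delta'$ into Smith normal form $\Delta' = UDV$ with $U,V\in GL_{n-1}(\mathbb{Z})$ and $D=\mathrm{diag}(d_1,\dots,d_{n-1})$, the cokernel is isomorphic to $\bigoplus_i \mathbb{Z}/d_i\mathbb{Z}$ and therefore has cardinality $\prod_i |d_i| = |\mathrm{Det}(D)| = |\mathrm{Det}(\Delta')|$. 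Part (ii) will then force $\mathrm{Det}(\Delta')$ to be the positive integer $\nsp{G}{v_j}$.

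For (ii) I would use the classical Tutte-style permutation expansion. Write
\[
\mathrm{Det}(\Delta') = \sum_{\sigma} \mathrm{sgn}(\sigma)\prod_{i\neq j}\Delta'_{i,\sigma(i)},
\]
and refine each factor combinatorially: a fixed point $\sigma(i)=i$ contributes a sum over non-loop out-edges of $v_i$ with coefficient $+1$ (from $\outdegree{G}{v_i}-\numedges{G}{v_i}{v_i}$), while a non-fixed point contributes a sum over edges from $v_i$ to $v_{\sigma(i)}$ with coefficient $-1$ (from $-\numedges{G}{v_i}{v_{\sigma(i)}}$). Combining $\mathrm{sgn}(\sigma)$ with the minus signs on the non-fixed points reduces the overall sign of each refined term to $(-1)^r$, where $r$ is the number of non-trivial $\sigma$-cycles. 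Each refined term thus corresponds to a configuration consisting of some disjoint oriented cycles on a subset of $V\setminus\{v_j\}$ (the non-trivial cycles of $\sigma$) together with a non-loop out-edge for every remaining non-root vertex.

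I would then define a sign-reversing involution on configurations whose underlying functional digraph (the map sending each non-root vertex to the head of its chosen edge) contains an oriented cycle: locate the unique such cycle containing the smallest-indexed vertex on any cycle, and toggle its representation between a $\sigma$-cycle and a sequence of fixed-point out-edges. This flips $r$ by $1$, reverses the sign, and is manifestly an involution; the fixed points are exactly those configurations whose functional digraph is acyclic, which are precisely the oriented spanning in-trees rooted at $v_j$, each contributing $+1$. Summing gives $\mathrm{Det}(\Delta') = \nsp{G}{v_j}$. The main technical hazard, and where I would be most careful, is the bookkeeping that collapses $\mathrm{sgn}(\sigma)\cdot(-1)^{\#\text{non-fixed points}}$ to the clean sign $(-1)^r$, and then checking that the toggle is well-defined in both directions (which relies on cycle length $\geq 2$, exactly matching the exclusion of loops from the fixed-point sum).
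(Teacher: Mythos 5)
The paper offers no proof of this statement: it is quoted from Stanley with a citation, and the only in-paper gesture toward an argument is the later remark that Lemma \ref{lem:eclassaction} ``implies a bijective proof for the matrix-tree theorem'' --- i.e.\ the equivalence classes of $\sim$ are put in bijection with the acyclic rotor configurations of $G$ with sink $v_j$, which are exactly the oriented spanning trees rooted at $v_j$. Your proof is correct and takes the classical, self-contained route instead: Smith normal form identifies the number of $\sim$-classes with the index $[\mathbb{Z}^{n-1}:\mathbb{Z}^{n-1}\Delta']=|\mathrm{Det}(\Delta')|$, and the permutation expansion with the sign-reversing involution on cyclic configurations identifies $\mathrm{Det}(\Delta')$ with the number $\mathcal{T}_G(v_j)$ of oriented spanning trees; your bookkeeping of $\mathrm{sgn}(\sigma)\cdot(-1)^{\#\{\text{non-fixed points}\}}=(-1)^r$ and your observation that the exclusion of loops from the diagonal entries is what makes the cycle-toggle well defined are both right. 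Two small remarks. First, the relation $\sim$ is defined via $c_1-c_2=z\Delta'$ with $z$ a row vector, so the relevant sublattice is generated by the rows of $\Delta'$ rather than its columns; this is immaterial for the index since $\mathrm{Det}(\Delta')=\mathrm{Det}(\Delta'^{T})$, but worth stating precisely. Second, you can avoid the separate diagonal-dominance verification of non-singularity entirely by proving part (ii) first (it needs no non-singularity hypothesis) and then using $\mathcal{T}_G(v_j)\geq 1$, which holds under strong connectedness, to conclude $\mathrm{Det}(\Delta')>0$ before invoking Smith normal form. The trade-off between the two approaches is that yours is elementary and independent of the rotor-router machinery, whereas the route through Lemma \ref{lem:eclassaction} produces an explicit bijection between equivalence classes and spanning trees that the paper then reuses in the proof of Theorem \ref{theo:orbitsize}.
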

\noindent It follows from the theorem that the value $\nsp{G}{v}$ can be computed efficiently by using the Laplacian matrix.

A vertex $s$ of $G$ is called a \emph{global sink} of $G$ if $s$ has outdegree $0$ and for every vertex $v$ of $G$ there is a path from $v$ to $s$. If $G$ has a global sink $s$, a rotor configuration $\rho$ on $G$ is called \emph{acyclic} if the subgraph of $G$ induced by the edges in $\{ \rho(v): v\neq s \}$ is acyclic. Observe that if $\rho$ is acyclic, then $\{\rho(v): v\neq s\}$ is an oriented spanning tree of $G$ rooted at $s$. The \emph{chip-addition operator $\chipadd{v}$} is the procedure of adding one chip to a vertex $v$ of $G$ and routing this chip until it arrives at the sink. This procedure results the rotor configuration $\rho'$, and we write $\chipadd{v}\rho=\rho'$. Look at Figure \ref{fig:im0607} for an illustration of the chip-addition operator.
\begin{figure}
\centering
\subfloat[A digraph with a global sink $s$]{\includegraphics[bb=0 0 201 200,width=1.4in,height=1.4in]{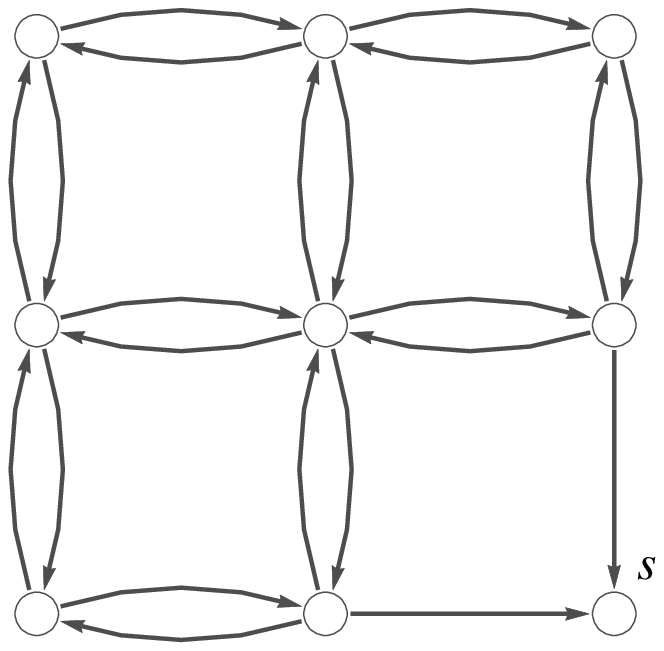}} \quad \quad
\subfloat[A rotor configuration $\rho$ with a chip at vertex $v$]{\includegraphics[bb=0 0 201 206,width=1.4in,height=1.4in]{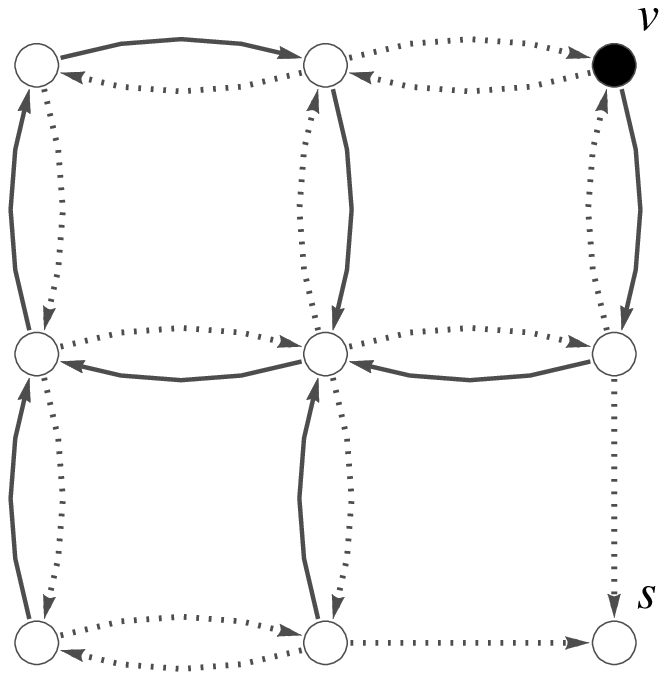}}  \quad \quad
\subfloat[When the chip arrives at the sink: $\chipadd{v} \rho$ (plane edges)]{\includegraphics[bb=0 0 201 206,width=1.4in,height=1.4in]{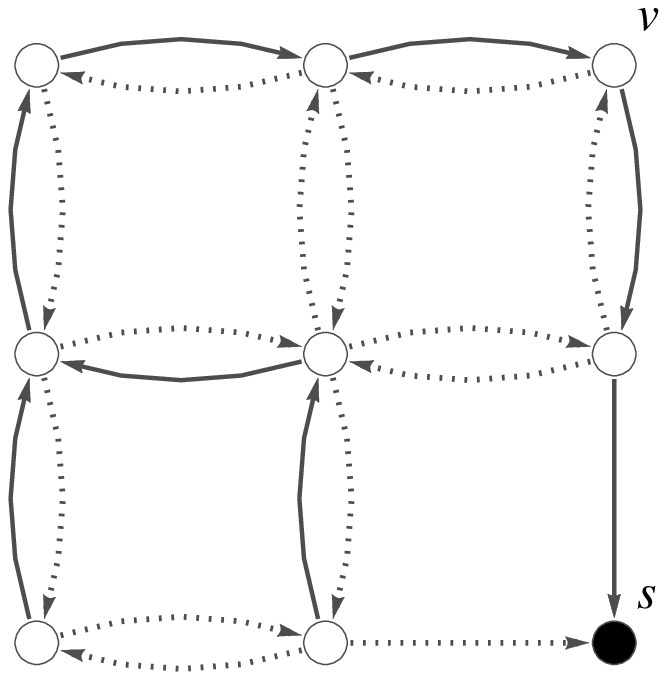}}
\caption{}
\label{fig:im0607}
\end{figure}
\begin{lemma}\cite{HLMPPW08}
Let $G=(V,E)$ be a digraph with a global sink $s$. Then the chip-addition operator is commutative. Moreover, for each $v\in V$ the operator $\chipadd{v}$ is a permutation on the set of acyclic rotor configurations of $G$.
\end{lemma}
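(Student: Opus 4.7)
The plan is to handle commutativity and the permutation property as two essentially independent assertions.

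For commutativity, I would invoke the classical abelian property of chip-firing via a diamond lemma. Suppose two chips are present at (possibly equal) non-sink vertices $u_1$ and $u_2$. I claim that routing the chip at $u_1$ followed by the chip at $u_2$ produces the same pair of (state, chip locations) as the reverse order: when $u_1 \neq u_2$, the two rotor updates are at distinct vertices and clearly commute, and when $u_1 = u_2$, the two rotations are consecutive at the same vertex and the chips are indistinguishable, so the outcome is order-independent. Applying this local confluence inductively to the routing sequence that starts with one chip at $v$ and one chip at $w$ above $\rho$ shows that both $\chipadd{v}\chipadd{w}\rho$ and $\chipadd{w}\chipadd{v}\rho$ represent the same stabilised configuration, so the chip-addition operators commute.

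For the permutation claim I would first show that $\chipadd{v}$ preserves acyclicity. Let $\rho$ be acyclic, so that $T = \{\rho(u) : u \neq s\}$ is a spanning tree in which rotor-following from any vertex reaches $s$. Set $\rho' = \chipadd{v}\rho$ and let $P$ be the set of vertices visited by the routed chip. For $u \notin P$ we have $\rho'(u) = \rho(u)$, so rotor-following leads to $s$ through $T$; for $u \in P$ the updated rotor $\rho'(u)$ is the edge along which the chip left $u$ on its last visit, and this edge sends us to the next vertex of the chip's trajectory after that visit, so rotor-following again reaches $s$, either directly or after exiting $P$ into the untouched part of $T$. Hence $\{\rho'(u) : u \neq s\}$ is a spanning tree rooted at $s$, and $\rho'$ is acyclic.

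The main obstacle is bijectivity. Since the set of acyclic configurations is in bijection with the finitely many spanning trees of $G$ rooted at $s$, it suffices to prove injectivity. One concrete route is an explicit inverse via \emph{reverse routing}: starting a virtual chip at $s$, one identifies at each backward step the unique preceding vertex by reading $\rho'$ and rewinding the cyclic order at that vertex, with acyclicity of $\rho'$ forcing the reversed trajectory to be uniquely determined and to terminate at $v$. A cleaner conceptual route is to appeal to the abelian sandpile group of $(G,s)$: the set of acyclic configurations carries a torsor structure for this group, under which $\chipadd{v}$ acts by translation by the class of $v$ and is therefore automatically invertible. Either way, the delicate step that will require the most care is verifying that the reversal (or equivalently, the group action) is well defined and deterministic on the entire set of acyclic configurations.
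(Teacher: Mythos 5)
This lemma is quoted in the paper as background and attributed to \cite{HLMPPW08}; the paper itself gives no proof, so your proposal has to stand on its own. Two of your three ingredients are essentially sound. The commutativity argument via local confluence is the standard one, though to invoke it you also need termination of the routing process (every chip reaches the sink), which follows from the global-sink hypothesis by observing that a vertex routed infinitely often would push infinitely many chips along a path to $s$; you should say this, since local confluence alone proves nothing without termination. Your proof that $\chipadd{v}$ preserves acyclicity is correct in substance: the key point, which you gesture at but should state explicitly, is that for a visited vertex $u$ the new rotor points to the vertex occupied at the \emph{next} time step after the last departure from $u$, so the last-departure time strictly increases along any $\rho'$-path inside the visited set and no cycle can form there. (Minor slip: from a visited vertex one never ``exits $P$ into the untouched part of $T$''; the new rotor always points to another visited vertex or to $s$. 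It is the unvisited vertices that may enter $P$.)

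The genuine gap is injectivity, which is the actual content of the permutation claim, and neither of your two suggested routes closes it. The torsor route is circular: asserting that the set of acyclic configurations is a torsor for the sandpile group on which $\chipadd{v}$ acts by translation already presupposes that $\chipadd{v}$ is a bijection, and in \cite{HLMPPW08} the torsor structure is \emph{deduced from} this lemma, not available beforehand. The reverse-routing route is underdetermined as you describe it: when the virtual chip sits at a vertex $w$ during the rewind, the candidates for the preceding vertex are all $u$ with $\rho'(u)$ pointing to $w$ --- that is, all children of $w$ in the new spanning tree --- and acyclicity of $\rho'$ does nothing to single one out. You need an actual selection rule and a proof that it inverts the forward routing (for instance, by showing that each single forward routing step is reversible on a suitable class of states, such as configurations whose rotor set has at most one cycle passing through the chip's position, and that all intermediate states of the routing belong to that class). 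As written, ``acyclicity forces the reversed trajectory to be uniquely determined'' is an assertion of the conclusion, not an argument, so the proof of the permutation statement is incomplete.
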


If $G$ has a global sink $s$, a chip configuration on $G$ is a map from $V\backslash \{s\}$ to $\mathbb{N}$. The commutative property of the chip-addition operator allows us to define the action of the set of chip configurations $c$ on  the set of rotor configurations of $G$ by $c(\rho):=\underset{v\in V\backslash \{s\}}{\mathlarger{\prod}} \chipadd{v}^{c(v)} \rho$.
 The following implies a bijective proof for the matrix-tree theorem.
\begin{lemma}\cite{HLMPPW08}
\label{lem:eclassaction}
Let $G$ be a digraph with a global sink $s$, $\rho$ be an acyclic rotor configuration on $G$, and $\sigma_1,\sigma_2$ be two chip configurations of $G$. Then $\sigma_1(\rho)=\sigma_2(\rho)$ if and only if $\sigma_1$ and $ \sigma_2$ are in the same equivalence class.
\end{lemma}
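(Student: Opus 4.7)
I plan to prove the two directions of the biconditional separately, both leveraging the fact, established in the preceding lemma, that each $\chipadd{v}$ is a commuting permutation of the set $\mathcal{A}$ of acyclic rotor configurations of $G$; the commutative monoid action of $\mathbb{N}^{V\setminus\{s\}}$ on $\mathcal{A}$ then extends uniquely to a group action of $\mathbb{Z}^{V\setminus\{s\}}$ via the inverse permutations.

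For the implication $\sigma_1\sim\sigma_2\Rightarrow\sigma_1(\rho)=\sigma_2(\rho)$, since $\sim$ is generated by the rows of $\Delta'$ it suffices to check that each row of $\Delta'$ fixes $\rho$. I would establish the operator identity
\[
\chipadd{v}^{\outdegree{G}{v}}\rho \;=\; \chipadd{v}^{\numedges{G}{v}{v}}\prod_{w\neq v,\,s}\chipadd{w}^{\numedges{G}{v}{w}}\rho,
\]
which, after cancelling the loop contribution at $v$, is precisely the statement that the $v$-th row of $\Delta'$ acts trivially on $\rho$. To prove the identity, compute the left-hand side by placing $\outdegree{G}{v}$ chips on $v$ and routing them to the sink. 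By the abelian property, one may first advance each of these chips a single step out of $v$: after these $\outdegree{G}{v}$ partial routings, the rotor at $v$ has completed one full cycle and is back at $\rho(v)$, while by the definition of the cyclic ordering exactly $\numedges{G}{v}{w}$ chips now sit at each vertex $w$. Completing the routing of the remaining non-sink chips to $s$ produces the right-hand side.

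For the converse I plan an \emph{odometer} argument. For $i=1,2$ let $u_i(v)$ be the number of times a chip is routed out of $v$ during the computation of $\sigma_i(\rho)$. Since each routing at $v$ advances its rotor by one step cyclically, the assumption $\sigma_1(\rho)=\sigma_2(\rho)$ forces
\[
u_1(v)-u_2(v)=k_v\cdot\outdegree{G}{v}
\]
for some $k_v\in\mathbb{Z}$. Combining this with the mass-balance $u_i(v)=\sigma_i(v)+\sum_{w\neq s}f_i(w,v)$, where $f_i(w,v)$ counts chips sent from $w$ into $v$ during the routings at $w$, the decisive observation is that a full rotor cycle at $w$ transmits exactly $\numedges{G}{w}{v}$ chips to $v$ regardless of its starting position, so $f_1(w,v)-f_2(w,v)=k_w\,\numedges{G}{w}{v}$. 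Subtracting the two mass-balance equations yields $\sigma_1(v)-\sigma_2(v)=(k\Delta')_v$ with $k=(k_v)\in\mathbb{Z}^{V\setminus\{s\}}$, giving $\sigma_1\sim\sigma_2$.

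The main thing to verify carefully is the identity $f_1(w,v)-f_2(w,v)=k_w\,\numedges{G}{w}{v}$, which is delicate when $k_w$ is negative. The clean viewpoint is that the list of outgoing edges used at $w$ during the computation of $\sigma_i(\rho)$ is the initial segment of length $u_i(w)$ of the periodic sequence determined by the cyclic ordering at $w$ starting from $\rho(w)^{+}$. Since $u_1(w)-u_2(w)=k_w\,\outdegree{G}{w}$ is an integer multiple of the period, the two initial segments differ by $|k_w|$ complete cycles, each containing $\numedges{G}{w}{v}$ occurrences of edges into $v$; taking signs into account gives the desired identity exactly. Once this bookkeeping is in place, both directions of the lemma fall out, and as a byproduct one obtains a bijective proof of the matrix-tree theorem identifying $\mathcal{A}$ with $\mathbb{Z}^{V\setminus\{s\}}/{\sim}$.
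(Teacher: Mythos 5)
The paper gives no proof of this lemma at all: it is quoted as background directly from \cite{HLMPPW08}, so there is no in-paper argument to compare against, and your write-up should be judged as a self-contained proof. It is correct, and it is essentially the standard argument from the cited source. The forward direction is sound: extending the $\mathbb{N}^{V\setminus\{s\}}$-action to a $\mathbb{Z}^{V\setminus\{s\}}$-action using that each $\chipadd{v}$ permutes the acyclic configurations, and then checking the single identity $\chipadd{v}^{\deg^{+}(v)}\rho=\chipadd{v}^{a(v,v)}\prod_{w\notin\{v,s\}}\chipadd{w}^{a(v,w)}\rho$, does suffice, because the group is abelian and each row of $\Delta'$ fixes $\rho$, hence so does every integer combination of rows. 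The converse via the odometer $u_i$ and the transfer counts $f_i(w,v)$ is also correct; your ``initial segment of a periodic sequence'' bookkeeping is exactly what is needed to get $f_1(w,v)-f_2(w,v)=k_w\,a(w,v)$ with the right signs, and subtracting the two conservation equations recovers $\sigma_1-\sigma_2=k\Delta'$. The one step you should make explicit rather than wave at: ``one may first advance each of these chips a single step out of $v$'' invokes the \emph{strong} abelian property (the final rotor configuration is unchanged under arbitrary interleaving of single-step routings of several coexisting chips), which is strictly more than the commutativity of the operators $\chipadd{v}$ as stated in Section \ref{backgrounds}; it is proved in \cite{HLMPPW08} and should be cited or established separately. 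With that reference supplied, the proof is complete, and, as you observe, it simultaneously yields the bijective proof of the matrix-tree theorem that the paper alludes to after the lemma.
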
 
\section{Orbits of rotor-router operation}
\label{section:orbitsize}
\renewcommand\numedges[3]{a({#2,#3})}
\renewcommand\outdegree[2]{deg^{+}(#2)}
\renewcommand\outdeg[2]{deg^{+}(#2)}
\renewcommand\indeg[2]{deg^{-}(#2)}
\renewcommand\nsp[2]{\mathcal{T}(#2)}
In this section we work with a connected digraph  $G=(V,E)$. For simplicity we use the notations $deg^{+}(v),deg^{-}(v)$ and $a(v,v')$ to stand for $deg_G^{+}(v),deg_G^{-}(v)$ and $a_G(v,v')$, respectively. Fix a linear order $v_1<v_2<\dots<v_n$ on $V$, where $n=|V|$, and let $\Delta$ denote the Laplacian matrix of $G$ with respect to this order. For each vertex $v$ let $\nsp{G}{v}$ denote the number of oriented spanning trees of $G$ rooted at $v$. Let $M$ denote the greatest common divisor of the numbers in $\{\nsp{G}{v}:v\in V\}$. The following lemma is a variant of the Markov chain tree theorem which will be important in the proof Theorem \ref{theo:orbitsize} (see \cite{AT89,LR83}).
\begin{lemma}\label{lem:leftkernel}
$(\nsp{G}{v_1},\nsp{G}{v_2},\dots,\nsp{G}{v_n})\Delta=\textbf{0}$, where $\textbf{0}$ denotes the row vector in $\mathbb{Z}^n$ whose entries are $0$.
\end{lemma}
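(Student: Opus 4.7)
The plan is to verify $(\nsp{G}{v_1},\ldots,\nsp{G}{v_n})\Delta=\mathbf{0}$ column by column. Reading off the $j$-th coordinate of the product and cancelling the loop contribution $\nsp{G}{v_j}\,\numedges{G}{v_j}{v_j}$ that appears on both sides, the claim reduces to the single ``tree-balance'' identity
$$
\nsp{G}{v_j}\,\outdegree{G}{v_j} \;=\; \sum_{i=1}^{n}\nsp{G}{v_i}\,\numedges{G}{v_i}{v_j}
$$
for every $j\in\{1,\ldots,n\}$.

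I would prove this identity bijectively. Let $A_j$ be the set of pairs $(T,e)$ where $T$ is an oriented spanning tree of $G$ rooted at $v_j$ and $e$ is an edge leaving $v_j$, and let $B_j$ be the set of pairs $(T',e')$ where $T'$ is an oriented spanning tree of $G$ rooted at some $v_i$ and $e'$ is an edge from $v_i$ to $v_j$. Then $|A_j|=\nsp{G}{v_j}\,\outdegree{G}{v_j}$ and $|B_j|=\sum_i \nsp{G}{v_i}\,\numedges{G}{v_i}{v_j}$, so it suffices to exhibit a bijection $A_j\leftrightarrow B_j$. The map is the standard cycle-swap: given $(T,e)\in A_j$, the subgraph $T\cup\{e\}$ has $n$ vertices, $n$ edges, is weakly connected, and every vertex has out-degree $1$, so its underlying undirected graph contains a unique cycle; this cycle is realised as a directed cycle through $v_j$, since following out-edges from $v_j$ along $e$ and then back via the unique path in $T$ returns to $v_j$. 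Letting $e'=(v_i,v_j)$ be the edge of this cycle entering $v_j$, I set $T':=(T\cup\{e\})\setminus\{e'\}$ and claim $(T',e')\in B_j$. The inverse takes $(T',e')\in B_j$ with $e'=(v_i,v_j)$, adjoins $e'$ to $T'$ to form the unique directed cycle through $v_i$ and $v_j$, and removes the cycle edge leaving $v_j$.

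The only step requiring genuine verification is that each swap really produces a rooted spanning tree. This follows from the structural observation that any weakly connected digraph on $n$ vertices with $n-1$ edges in which one vertex $r$ has out-degree $0$ and all others have out-degree $1$ must be an oriented spanning tree rooted at $r$: the underlying undirected graph has $n-1$ edges and is connected, hence a tree with no cycles, so following out-edges from any vertex yields a simple path that can only terminate at $r$. I expect this to be the only mildly technical point; once the column-wise reduction and the bijection are in place, the rest is routine bookkeeping.
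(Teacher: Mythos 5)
Your proposal is correct, but it takes a genuinely different route from the paper: the paper does not prove Lemma \ref{lem:leftkernel} at all, instead presenting it as ``a variant of the Markov chain tree theorem'' and deferring to the cited references \cite{AT89,LR83}. You supply a self-contained combinatorial proof. The column-wise reduction to the identity $\mathcal{T}_G(v_j)\,deg_G^{+}(v_j)=\sum_{i}\mathcal{T}_G(v_i)\,a_G(v_i,v_j)$ is exactly right (the loop terms $\mathcal{T}_G(v_j)\,a_G(v_j,v_j)$ are subtracted from both sides of the diagonal entry, so they cancel cleanly), and your cycle-swap bijection between $A_j$ and $B_j$ is the standard tree-swap argument underlying combinatorial proofs of the Markov chain tree theorem, here specialized to the unweighted Laplacian. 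All the delicate points are handled or handleable: $T\cup\{e\}$ is a functional graph on a weakly connected vertex set, hence has a unique directed cycle, and that cycle passes through $v_j$ because $v_j$'s unique out-edge $e$ must lie on it; the loop case $e'=e$ a loop at $v_j$ degenerates to the identity map and accounts for the $i=j$ term of the sum; and your structural observation correctly certifies that each swap yields an oriented spanning tree rooted at the unique out-degree-zero vertex. What your approach buys is a proof that is elementary, self-contained, and makes the lemma independent of the probabilistic literature; what the paper's citation buys is brevity and an explicit link to the Markov chain interpretation (the vector $(\mathcal{T}_G(v_i))_i$ as an unnormalized stationary distribution), which the author exploits in the discussion of rotor walks simulating random walks. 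One small presentational point: it is worth stating explicitly that the forward and backward swaps are mutually inverse because both operate on the same $n$-edge subgraph $T\cup\{e\}=T'\cup\{e'\}$ and its unique cycle, removing respectively the cycle edge entering and the cycle edge leaving $v_j$.
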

%

From now until the end of this section we assume $G$ to be strongly connected. This assumption implies that $\nsp{G}{v}\geq 1$ for any $v\in V$.
\begin{corollary}
\label{coro:generator}
The vector $\frac{1}{M} (\nsp{G}{v_1},\nsp{G}{v_2},\dots,\nsp{G}{v_n})$ is a generator of the kernel of the operator $z\mapsto z \Delta$ in $(\mathbb{Z}^n,+)$. 
\end{corollary}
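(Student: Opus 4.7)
The plan is to show that the left kernel of $\Delta$ in $\mathbb{Z}^n$ is a rank-one sublattice generated by the integer vector $w := \frac{1}{M}(\mathcal{T}(v_1), \ldots, \mathcal{T}(v_n))$.

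First I would establish that $\Delta$ has rank exactly $n-1$ over $\mathbb{Q}$. The row sums of $\Delta$ vanish by the definition of the Laplacian, giving the upper bound. For the matching lower bound, the matrix-tree theorem recalled earlier expresses the principal minor obtained by deleting the $j$-th row and column as $\mathcal{T}(v_j)$, and since $G$ is strongly connected each $\mathcal{T}(v_j)$ is at least $1$. Hence the left kernel of $\Delta$, viewed over $\mathbb{Q}$, is a one-dimensional subspace of $\mathbb{Q}^n$, so its intersection with $\mathbb{Z}^n$ is a rank-one sublattice.

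By Lemma \ref{lem:leftkernel} the vector $t := (\mathcal{T}(v_1), \ldots, \mathcal{T}(v_n))$ lies in this left kernel, so $w = \frac{1}{M} t \in \mathbb{Z}^n$ does as well. Given any $z \in \mathbb{Z}^n$ with $z\Delta = \mathbf{0}$, one-dimensionality forces $z = \alpha w$ for some $\alpha \in \mathbb{Q}$. Writing $\alpha = p/q$ in lowest terms, the integrality of each entry $\alpha w_i$ yields $q \mid w_i$ for every $i$, so $q$ divides $\gcd(w_1, \ldots, w_n)$, which equals $1$ by the very definition of $M$. Hence $\alpha \in \mathbb{Z}$, and $w$ generates the left kernel in $(\mathbb{Z}^n, +)$.

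The only substantive point is the rank computation for $\Delta$, and this is immediate from the matrix-tree theorem together with strong connectivity; the remainder is a routine coprimality argument, so I do not anticipate any serious obstacle.
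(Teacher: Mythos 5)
Your proposal is correct and follows essentially the same route as the paper: place the vector $(\mathcal{T}(v_1),\dots,\mathcal{T}(v_n))$ in the one-dimensional left kernel over $\mathbb{Q}$ via Lemma \ref{lem:leftkernel}, then use the gcd condition to show that dividing by $M$ yields a generator of the integer kernel. The only difference is that you explicitly justify the rank-$(n-1)$ claim (via vanishing row sums and a nonzero principal minor from the matrix-tree theorem plus strong connectivity), which the paper simply asserts.
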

\begin{proof}
We consider the operator $z\mapsto z \Delta$  in the vector space $\mathbb{Q}^n$ over the field $\mathbb{Q}$. Since $\Delta$ has rank $n-1$, the kernel has dimension $1$ in $\mathbb{Q}^n$.  By Lemma \ref{lem:leftkernel} the vector $(\nsp{G}{v_1},\nsp{G}{v_2},\dots,\nsp{G}{v_n})$ is in the kernel. Thus for any vector $z\in \mathbb{Z}^n$ such that $z \Delta=0$ there exists $q\in \mathbb{Q}$ such that $z=q (\nsp{G}{v_1},\nsp{G}{v_2},\dots,\nsp{G}{v_n})$. Since $M$ is the greatest common divisor of the numbers $\nsp{G}{v_1},\nsp{G}{v_2},\dots,\nsp{G}{v_n}$, we have $q M\in \mathbb{Z}$. This implies that $\frac{1}{M} (\nsp{G}{v_1},\nsp{G}{v_2},\dots,\nsp{G}{v_n})$ is a generator of the kernel of $z\mapsto z \Delta$ in $(\mathbb{Z}^n,+)$.
\end{proof}
\begin{figure}
\centering
\subfloat[$(w_1,\rho_{i_j})=(w_{i_j},\rho_{i_j})$]{\includegraphics[bb=0 0 201 200,width=1.5in,height=1.5in]{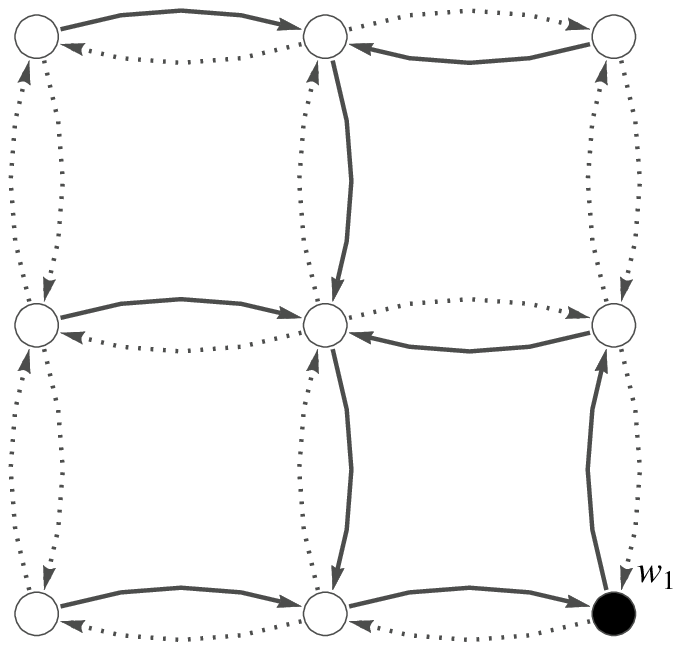}} \quad
\subfloat[$(w_{i_j+1},\rho_{i_j+1})$]{\includegraphics[bb=0 0 198 197,width=1.5in,height=1.5in]{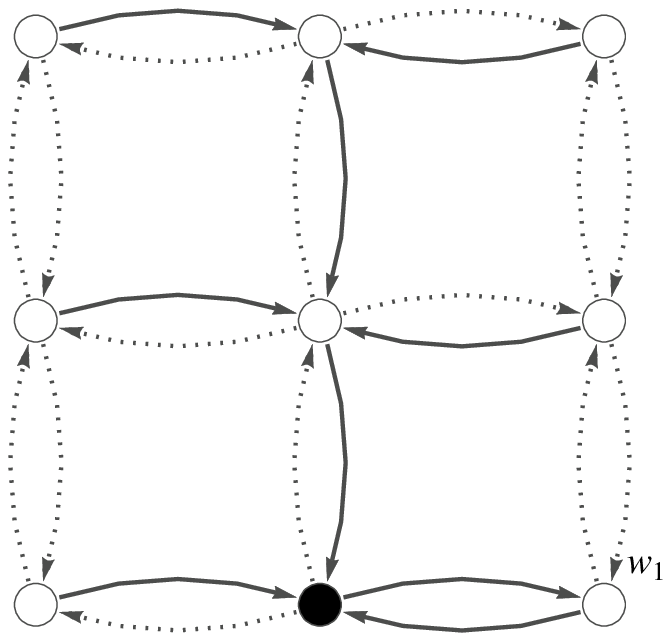}}\quad
\subfloat[$(w_{i_j+2},\rho_{i_j+2})$]{\includegraphics[bb=0 0 192 190,width=1.5in,height=1.5in]{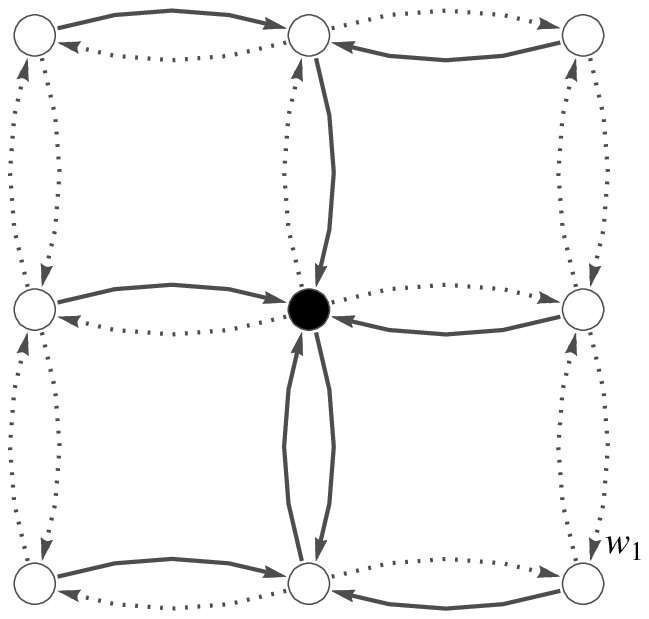}}\\
\subfloat[$(w_{i_j+3},\rho_{i_j+3})$]{\includegraphics[bb=0 0 186 184,width=1.7in,height=1.5in]{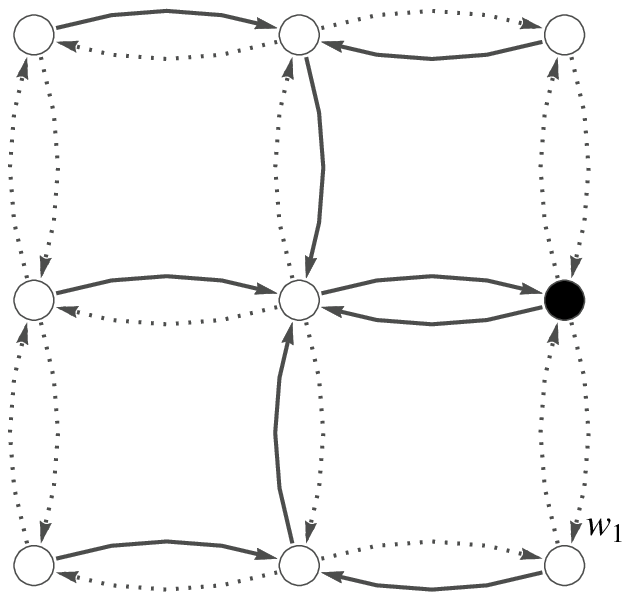}}\quad \quad
\subfloat[$(w_{i_j+4},\rho_{i_j+4})=(w_1,\rho_{i_{j+1}})$]{\includegraphics[bb=0 0 180 178,width=1.7in,height=1.5in]{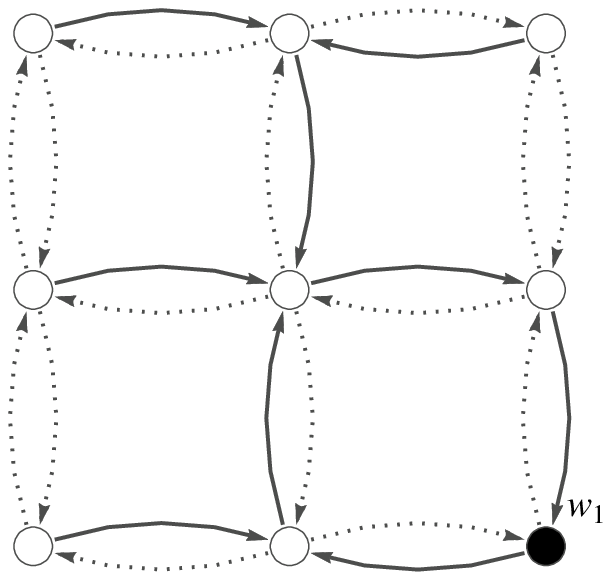}}\\
\subfloat[$\overline{\rho_{i_j}}$]{\includegraphics[bb=0 0 260 259,width=1.5in,height=1.5in]{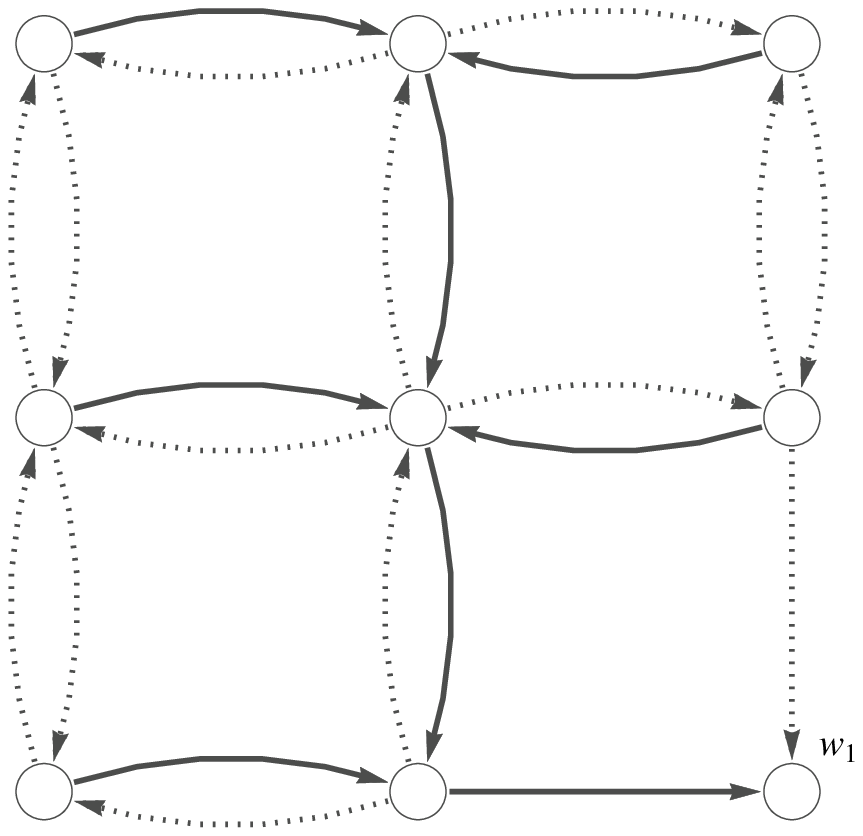}}
\quad \quad
\subfloat[$\overline{\rho_{i_{j+1}}}$]{\includegraphics[bb=0 0 260 259,width=1.5in,height=1.5in]{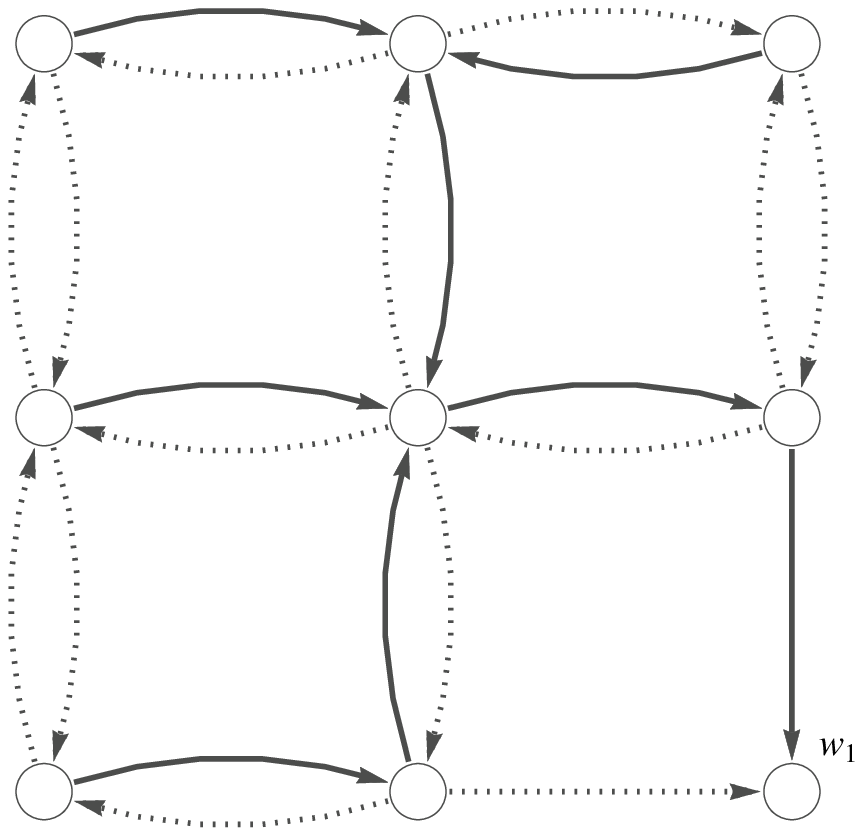}}
\caption{}
\label{fig:im0910111213}
\end{figure}
\begin{lemma}
\label{lem:spantoorder}
For $i\in \{1,2,\dots,n\}$ let $\Delta^{'}$ denote the matrix obtained from $\Delta$ by deleting the $i^{th}$ column. Then the order of $\Delta_i^{'}$ in the quotient group $(\mathbb{Z}^{n-1},+)/\langle\{\Delta_{j}^{'}:j\neq i\}\rangle$ is $\frac{\nsp{G}{v_i}}{M}$.
\end{lemma}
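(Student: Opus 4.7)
The plan is to rephrase the order of $\Delta_i^{'}$ in the quotient group as a question about the integer left kernel of the rectangular matrix $\Delta'$, and then identify this kernel with the integer left kernel of the full Laplacian $\Delta$, which is already described by Corollary~\ref{coro:generator}.

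First I would unwind the definition. The order is the smallest positive integer $k$ for which $k\Delta_i^{'} = \sum_{j\neq i} a_j \Delta_j^{'}$ holds with integer coefficients $a_j$. Setting $b_i := k$ and $b_j := -a_j$ for $j \neq i$, this reformulates to: the smallest positive value of the $i$-th coordinate $b_i$ attained by some $b \in \mathbb{Z}^n$ with $b\Delta' = 0$. Thus the whole lemma reduces to describing the integer left kernel of $\Delta'$.

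Next I would establish the key identification
\[
\{\, b \in \mathbb{Z}^n : b\Delta' = 0 \,\} \;=\; \{\, b \in \mathbb{Z}^n : b\Delta = 0 \,\}.
\]
The inclusion $\supseteq$ is immediate since $\Delta'$ is a submatrix of $\Delta$. For $\subseteq$, note that every row of $\Delta$ sums to zero---this is immediate from $\outdegree{G}{v_k} = \sum_\ell \numedges{G}{v_k}{v_\ell}$---so the all-ones column vector $\mathbf{1}$ lies in the right kernel of $\Delta$. Hence if $b\Delta' = 0$, which says $(b\Delta)_j = 0$ for every $j \neq i$, then $0 = b\Delta\mathbf{1} = \sum_j (b\Delta)_j = (b\Delta)_i$ as well, forcing $b\Delta = 0$.

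Finally, by Corollary~\ref{coro:generator} this common kernel is cyclic, generated by $\tfrac{1}{M}(\nsp{G}{v_1},\ldots,\nsp{G}{v_n})$, which is an integer vector since $M$ divides every $\nsp{G}{v_j}$. Writing $b = t\cdot\tfrac{1}{M}(\nsp{G}{v_1},\ldots,\nsp{G}{v_n})$ with $t\in\mathbb{Z}$ gives $b_i = t\,\nsp{G}{v_i}/M$, whose smallest positive value is $\nsp{G}{v_i}/M$, achieved at $t=1$. The main---and really the only---obstacle is the kernel identification step; once one spots that the column dropped from $\Delta$ is recoverable from the others via the relation $\Delta\mathbf{1}=0$, the remainder is routine bookkeeping.
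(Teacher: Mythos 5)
Your proof is correct and follows essentially the same route as the paper: reduce the order of $\Delta_i^{'}$ in the quotient group to finding the smallest positive $i$-th coordinate of an integer vector in the left kernel of the Laplacian, then invoke Corollary \ref{coro:generator}. The one difference is that the paper passes from $b\Delta'=\textbf{0}$ to $b\Delta=\textbf{0}$ with a bare ``equivalently,'' whereas you justify this step explicitly via $\Delta\mathbf{1}=0$ --- a small but genuine gap in the paper's exposition that your argument fills.
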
 
\begin{proof}
Clearly, the order of $\Delta_i^{'}$ in $(\mathbb{Z}^{n-1},+)/\langle\{\Delta_{j}^{'}:j\neq i\}\rangle$ is the smallest positive integer $p_i$ such that there exist integers $p_1,p_2,\dots,p_{i-1},p_{i+1},\dots,p_n$ such that $p_i \Delta_i^{'}=\underset{j\neq i}{\sum} p_j \Delta_j^{'}$, equivalently $$(-p_1,-p_2,\dots,-p_{i-1},p_i,-p_{i+1},\dots,-p_n)\Delta=\textbf{0}.$$

\noindent It follows from Corollary \ref{coro:generator} that $p_i=\frac{\nsp{G}{v_i}}{M}$.
\end{proof}
\begin{proof}[Proof of Theorem \ref{theo:orbitsize}]
Let $(w_1,\rho_1)$ be an arbitrary unicycle of $G$. Let $(w_1,\rho_1),(w_2,\rho_2),(w_3,\rho_3),\dots$ be the infinite sequence of states such that for any $i\geq 1$ the state $(w_{i+1},\rho_{i+1})$ is obtained from the state $(w_i,\rho_i)$ by applying the rotor-router operation. By collecting all states $(w_i,\rho_i)$ with $w_i=w_1$ we obtain the subsequence $(w_1,\rho_{i_1}),(w_1,\rho_{i_2}),(w_1,\rho_{i_3}),\dots$. Note that $1=i_1$. For each $\rho_{i_j}$ let $u_j$ denote the head of $\rho_{i_j}(w_1)$. Let $e_1,e_2,\dots,e_k$, where $k=\outdegree{G}{w_1}$, be an enumeration of the edges emanating from $w_1$ such that $e_1=\rho_1(w_1)$ and $e_{i+1}=e_i^{+}$ for any $i<k$, and $e_1=e_k^{+}$.

Let $\overline{G}$ denote the graph obtained from $G$ by deleting all  edges emanating from $w_1$, and for each $\rho_{i_j}$ let $\overline{\rho_{i_j}}$ denote the restriction of $\rho_{i_j}$ on $\overline{G}$. We have that $\overline{\rho_{i_j}}$ is an acyclic rotor configuration of $\overline{G}$ (see Figure \ref{fig:im0910111213}). It follows from the definition of the chip addition operator that $\overline{\rho_{i_{j+1}}}=\chipadd{u_{j+1}} \overline{\rho_{i_j}}$. Note that if $u_{j+1}=w_1$, then $\overline{\rho_{i_{j+1}}}=\overline{\rho_{i_j}}$. For each $q>1$ we define the chip configuration $c_q:V\backslash \{w_1\}\to\mathbb{N}$ by for any $v\in V\backslash \{w_1\}$ $c_q(v)$ is the number of occurrences of $v$ in the sequence $u_2,u_3,\dots,u_q$. The above identity implies that $\overline{\rho_{i_q}}=c_q(\overline{\rho_{i_1}})$. Let $\Delta^{'}$ be the matrix that is obtained from $\Delta$ by deleting the column corresponding to $w_1$. We have $\rho_{i_q}=\rho_{i_1}$  if and only if the following conditions hold
\begin{itemize}
  \item[-] the configuration $c_q$ is in the same equivalence class as $\textbf{0}$ in $\overline{G}$. This fact follows from \linebreak Lemma \ref{lem:eclassaction}.
  \item[-] $c_q=-p \Delta'_{w_1}$ for some $p$, where $\Delta'_{w_1}$ denotes the row of $\Delta'$ corresponding to the vertex $w_1$. This follows the fact that the sequence $\rho_{i_1}(w_1),\rho_{i_2}(w_1),\rho_{i_3}(w_1)\dots$ is exactly the periodic sequence $e_1,e_2,\dots,e_k,e_1,e_2,\dots,e_k,\dots$ Note that $\rho_{i_2}(w_1),\rho_{i_3}(w_1),\dots,\rho_{i_q}(w_1)$ is a periodic sequence of length $p k$, namely $\underset{\text{length } pk}{\underbrace{e_2,e_3,\dots,e_k,e_1,\dots,e_2,e_3\dots,e_k,e_1}}$.
\end{itemize} 
Thus $1+pk$ is the smallest $q$ satisfying $\rho_{i_1}=\rho_{i_q}$, where $p$ is the order of $\Delta'_{w_1}$ in $\mathbb{Z}^{n-1}/\langle\{\Delta'_{v}:v\in V\backslash\{ w_1\}\}\rangle$. By Lemma \ref{lem:spantoorder} we have $p=\frac{1}{M}\nsp{G}{w_1}$. It follows that in the orbit $\{(w_i,\rho_i):1\leq i\leq i_{1+p k}-1\}$ the number of times the chip passes through $w_1$ is $\frac{1}{M}\outdeg{G}{w_1}\nsp{G}{w_1}$. Since this fact also holds for other vertices, the size of orbit is $\frac{1}{M} \underset{v\in V}{\mathlarger{\sum}} \outdegree{G}{v}\nsp{G}{v}$.

Since the number of unicycles is $\underset{v\in V}{\mathlarger{\sum}}\outdegree{G}{v} \nsp{G}{v}$, it follows that the number of orbits of the rotor-router operation is $M$.
\end{proof}

If $G$ is an Eulerian digraph, then the numbers of oriented spanning trees $\nsp{G}{v},v\in V$ are the same since $\nsp{G}{v}$ is equal to the order of the sandpile group of $G$ with sink $v$ and the sandpile group is independent of the choice of sink \cite{HLMPPW08}. Thus $M=\nsp{G}{v_1}=\nsp{G}{v_2}=\dots=\nsp{G}{v_n}$. By Theorem \ref{theo:orbitsize} each orbit of the rotor-router operation has size $\underset{v\in V}{\mathlarger{\sum}} \outdegree{G}{v}=|E|$. We recover the result in \cite{HLMPPW08,PDDS96}.
\begin{proposition}\cite{HLMPPW08,PDDS96}
Let $G$ be an Eulerian digraph with $m$ edges. Starting from a unicycle $(w,\rho)$ the chip traverses each edge exactly once before returning to $(w,\rho)$ for the first time. 
\end{proposition}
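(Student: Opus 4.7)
The plan is to specialize Theorem~\ref{theo:orbitsize} to the Eulerian setting. First, as observed in the paragraph preceding the proposition, when $G$ is Eulerian the numbers $\nsp{G}{v}$ agree across all vertices, so $M=\nsp{G}{v}$ for every $v\in V$. Substituting into the formula of Theorem~\ref{theo:orbitsize} collapses the orbit size to $\sum_{v\in V}\outdegree{G}{v}=|E|=m$. Hence, starting from the unicycle $(w,\rho)$ the chip performs exactly $m$ routing steps before returning to $(w,\rho)$.

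Next, I would show that during these $m$ steps no edge is used twice; combined with the total of $m$ edge-traversals, this forces every edge to be used exactly once. For this I would reuse the finer count extracted inside the proof of Theorem~\ref{theo:orbitsize}: the chip passes through an arbitrary vertex $u\in V$ in one orbit precisely $\frac{1}{M}\outdegree{G}{u}\nsp{G}{u}$ times, which in the Eulerian case simplifies to $\outdegree{G}{u}$. That same proof also established that the successive rotor values at $u$, namely $\rho_{i_1}(u),\rho_{i_2}(u),\ldots$, cycle through the outgoing edges $e_1,\ldots,e_{\outdegree{G}{u}}$ in their cyclic order with period $k\cdot\nsp{G}{u}/M$. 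For Eulerian $G$ this period is exactly $\outdegree{G}{u}$, so the chip's $\outdegree{G}{u}$ departures from $u$ make one complete revolution around the cyclic order, and consequently each outgoing edge of $u$ is used exactly once.

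Summing this local statement over all vertices $u\in V$ yields that every edge of $G$ is traversed exactly once before the chip returns to $(w,\rho)$, which is the content of the proposition.

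I do not anticipate a substantial obstacle; the argument is essentially a matter of plugging $M=\nsp{G}{v}$ into the quantities already computed in the proof of Theorem~\ref{theo:orbitsize} and reading off the consequences at each vertex. The only point requiring mild care is confirming that the cyclic rotor progression at each vertex completes exactly one full revolution rather than zero or multiple revolutions, which is immediate from matching the visit count $\outdegree{G}{u}$ to the outdegree.
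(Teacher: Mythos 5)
Your proposal is correct and follows essentially the same route as the paper: the paper likewise derives the proposition by noting that $\mathcal{T}_G(v)$ is constant over $v$ for Eulerian $G$ (so $M=\mathcal{T}_G(v)$) and plugging into Theorem~\ref{theo:orbitsize} to get orbit size $|E|$. In fact you are somewhat more explicit than the paper, which stops at the orbit-size computation, whereas you additionally spell out (correctly, using the per-vertex rotor-cycling count from the proof of Theorem~\ref{theo:orbitsize}) why each individual edge is traversed exactly once.
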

\renewcommand\outdeg[2]{deg_{#1}^{+}(#2)}
\renewcommand\indeg[2]{deg_{#1}^{-}(#2)}
\renewcommand\nsp[2]{\mathcal{T}_{#1}(#2)}
\begin{proof}[Proof of Theorem \ref{theo:singleorbit}]
For each $n\geq 3$ let $G_n$ be the strongly connected digraph given by $V(G_n):=\{1,2,\dots,n\}$ and $E(G_n):=\{ (i,i+1):1\leq i \leq n-1 \}\cup \{ (i,1):2\leq i \leq n \}$. Since $\outdeg{G_n}{1}=1$ and $\indeg{G_n}{1}=n-1$, $G_n$ is not Eulerian. Since $G_n$ has exactly one oriented spanning tree rooted at $n$, namely the subgraph induced by the edges in $\{ (i,i+1):1\leq i\leq n-1 \}$, we have $\nsp{G_n}{n}=1$, therefore $M_{G_n}=1$. By Theorem \ref{theo:orbitsize} all unicycles are in a single orbit.
\end{proof}

\def\newstd{\bar{\pi}}
\renewcommand\outdegree[2]{deg_{#1}^{+}(#2)}
\renewcommand\indegree[2]{deg_{#1}^{-}(#2)}
\renewcommand\nsp[2]{\mathcal{T}_{#1}(#2)}

The formula in Theorem \ref{theo:orbitsize} is very useful because one can use it to compute size of an orbit effeciently without listing all unicycles in an orbit. As we saw above, size of orbits on a strongly connected digraph is often large while it is extremely short on an Eulerian digraph. If orbit size is too large (resp. too small), then number of orbits is too small (resp. too large). Thus one would expect to see an infinite family of strongly connected digraphs $G_n$ on which the rotor-router operation behaves moderately, i.e. both the orbit size and the number of orbits grow exponentially in the number of vertices and in the number of edges. By using Theorem \ref{theo:orbitsize} we construct easily such a family of digraphs as follows. For $n\geq 1$ the graph $G_n$ has the vertex set $\{1,2,\dots, n+1\}$, and for each $i\in \{1,2,\dots,n\}$ there are two edges connecting $i$ to $i+1$ and four edges connecting $i+1$ to $i$ in $G_n$. It is easy to see that $\nsp{G_n}{i}=4^{n+1-i}\times 2^{i-1}=2^{2 n+1-i}$ for any $i\in \{1,2,\dots,n+1\}$. Therefore we have $M_{G_n}=2^n$. It follows from Theorem \ref{theo:orbitsize} that the number of orbits is $2^n$ and the size of orbits  is greater than $\frac{\nsp{G_n}{1}}{2^n}=2^n$. Thus the family of digraphs $G_n$ has the desired property.\\

\noindent{\large\textbf{Acknowledgements.}} We are thankful to L. Levine, M. Farrell and the referee for their useful comments and suggestions on the paper.

\text{}\\
\text{}\\
Technische Universit\"{a}t Dresden\\
Fachrichtung Mathematik, Institut f\"{u}r Algebra\\
01062 Dresden, Germany\\
\text{}\\
Institute of Mathematics, VAST\\
Department of Mathematics for Computer Science\\
18 Hoang Quoc Viet Road, Cau Giay District, Hanoi, Vietnam.\\
\text{}\\
Email address: pvtrung@math.ac.vn
\end{document}